\providecommand{\U}[1]{\protect\rule{.1in}{.1in}}
\newtheorem{theorem}{Theorem}
\theoremstyle{plain}
\newtheorem{corollary}[theorem]{Corollary}
\numberwithin{equation}{section}
\begin{document}
\title[Dirichlet's theorem and Midy's property]{On a particular case of the Dirichlet's theorem and the Midy's property}
\author[J.H. Castillo]{John H. Castillo}
\address{John H. Castillo, Departamento de Matemáticas y Estadística, Universidad de Nariño, San Juan de Pasto-Colombia}
\email{jhcastillo@gmail.com}
\author[G. García-Pulgarín]{Gilberto Garc\'\i a-Pulgar\'in}
\address{Gilberto Garc\'\i a-Pulgar\'in, Universidad de Antioquia, Medellín-Colombia}
\email{gigarcia@ciencias.udea.edu.co}
\author[J.M Velásquez Soto]{Juan Miguel Vel\'asquez-Soto}
\address{Juan Miguel Vel\'asquez-Soto, Departamento de Matemáticas, Universidad del Valle, Cali-Colombia}
\email{jumiveso@univalle.edu.co} \subjclass[2000]{11A05, 11A07,
11A15, 11A63, 16U60}
\date{}

\begin{abstract}
We give new characterizations of the Midy's property and using these
results we obtain a new proof of a special case of the Dirichlet's
theorem about primes in arithmetic progression.

\end{abstract}
\maketitle

\section{Preliminaries}

Let $b$ be a positive integer greater than $1$, $b$ will denote the
base of numeration, $N$ a positive integer relatively prime to $b$,
i.e $(N,b)=1$, $\left\vert b\right\vert _{N}$ the order of $b$ in
the multiplicative group $\mathbb{U}_{N}$ of positive integers less
than $N$ and relatively primes to $N,$ and $x\in\mathbb{U}_{N}$. It
is well known that when we write the fraction $\frac{x}{N}$ in base
$b$, it is periodic. By period we mean the smallest repeating
sequence of digits in base $b$ in such expansion, it is easy to see
that $\left\vert b\right\vert _{N}$ is the length of the
period of the fractions $\frac{x}{N}$ (see Exercise 2.5.9 in \cite{Nathanson}%
). Let $d,\,k$ be positive integers with $\left\vert b\right\vert
_{N}=dk$, $d>1$ and $\frac{x}{N}=0.\overline{a_{1}a_{2}\cdots
a_{\left\vert b\right\vert _{N}}}$ where the bar indicate the period
and $a_{i}$'s are digits in base $b$. We separate the period
${a_{1}a_{2}\cdots a_{\left\vert b\right\vert _{N}}}$ in $d$ blocks
of length $k$ and let
\[
A_{j}=[a_{(j-1)k+1}a_{(j-1)k+2}\cdots a_{jk}]_{b}%
\]
be the number represented in base $b$ by the $j$-th block and $S_{d}%
(x)=\sum\limits_{j=1}^{d}A_{j}$. If for all $x\in\mathbb{U}_{N}$,
the sum $S_{d}(x)$ is a multiple of $b^{k}-1$ we say that $N$ has
the Midy's property for $b$ and $d$. It is named after E. Midy
(1836), to read historical aspects about this property see
\cite{Lewittes} and its references.

We denote with $\mathcal{M}_{b}(N)$ the set of positive integers $d$
such that $N$ has the Midy's property for $b$ and $d$ and we will
call it the Midy's set of $N$ to base $b$. As usual, let
$\nu_{p}(N)$ be the greatest exponent of $p$ in the prime
factorization of $N$.

For example $13$ has the Midy's property to the base $10$ and $d=3$,
because $|13|_{10}=6$, $1/13=0.\overline{076923}$ and $07+69+23=99$.
Also, $75$ has the Midy's property to the base $8$ and $d=4$, since
$|75|_{8}=20$, $1/75=[0.\overline{00664720155164033235}]_8$ and
$[00664]_8+[72015]_8+[51640]_8+[33235]_8=2*(8^5-1)$. But $75$ does
not have the Midy's property to $8$ and $5$. Actually, we can see
that $\mathcal{M}_{10}(13)=\{2,3,6\}$ and $\mathcal{M}_{8}(75)=\{4,
20\}$.

In \cite{garcia09} is given the following characterization of Midy's
property.

\begin{theorem}
\label{ppl2} If $N$ is a positive integer and $\left\vert
b\right\vert _{N}=kd$, then $d\in\mathcal{M}_{b}(N)$ if and only if
$\nu_{p}(N)\leq\nu _{p}(d)$ for all prime divisor $p$ of $(b^{k}-1,\
N)$.
\end{theorem}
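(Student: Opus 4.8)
The plan is to strip the individual digits out of the Midy condition, converting it into a single divisibility statement about $N$, and then to analyze that statement one prime at a time. To begin, since $x/N$ is purely periodic of period length $kd$, we have $\frac{x}{N}=\frac{c_x}{b^{kd}-1}$, where $c_x=x(b^{kd}-1)/N$ is the integer whose base-$b$ digits are exactly the period, so that $c_x=\sum_{j=1}^{d}A_j\,b^{k(d-j)}$. Reducing modulo $b^{k}-1$ and using $b^{k}\equiv1$ collapses every factor $b^{k(d-j)}$ to $1$, giving $c_x\equiv\sum_{j=1}^{d}A_j=S_d(x)\pmod{b^{k}-1}$. Hence $(b^{k}-1)\mid S_d(x)$ if and only if $(b^{k}-1)\mid c_x$, which replaces the digit-sum condition by a condition on $c_x$.

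Next I would factor $b^{kd}-1=(b^{k}-1)\,q$ with $q=\sum_{i=0}^{d-1}b^{ki}$, so that $c_x=x(b^{k}-1)q/N$ and therefore $c_x/(b^{k}-1)=xq/N$. Thus $(b^{k}-1)\mid c_x$ is equivalent to $N\mid xq$, and since $(x,N)=1$ this is simply $N\mid q$. A convenient feature is that this condition no longer depends on $x$, so $d\in\mathcal{M}_b(N)$ holds if and only if $N\mid q$; the quantifier over all $x\in\mathbb{U}_N$ becomes automatic. It then remains to show that $N\mid q$ is equivalent to the stated condition on $p$-adic valuations.

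For that equivalence I would argue prime by prime: $N\mid q$ means $\nu_p(N)\le\nu_p(q)$ for every prime $p\mid N$. If $p\nmid b^{k}-1$, then from $N\mid b^{kd}-1=(b^{k}-1)q$ and $\nu_p(b^{k}-1)=0$ we get $\nu_p(q)=\nu_p(b^{kd}-1)\ge\nu_p(N)$ automatically, so only the primes $p\mid(b^{k}-1,N)$ impose a genuine constraint. For an odd such $p$ the Lifting-the-Exponent lemma gives $\nu_p(b^{kd}-1)=\nu_p(b^{k}-1)+\nu_p(d)$, whence $\nu_p(q)=\nu_p(d)$ and the requirement $\nu_p(N)\le\nu_p(q)$ becomes exactly $\nu_p(N)\le\nu_p(d)$, as claimed.

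The step I expect to be the main obstacle is the prime $p=2$. There the Lifting-the-Exponent lemma takes a different form: for even $d$ it reads $\nu_2(b^{kd}-1)=\nu_2(b^{k}-1)+\nu_2(b^{k}+1)+\nu_2(d)-1$, so that $\nu_2(q)=\nu_2(d)+\bigl(\nu_2(b^{k}+1)-1\bigr)$ carries an extra term. Reconciling this with the clean condition $\nu_2(N)\le\nu_2(d)$ is the delicate point, since one needs $b^{k}\equiv1\pmod 4$ (or $d$ odd) for the extra term to vanish. I would expect the bulk of the work, together with any hypothesis or special-case handling required at $p=2$, to be concentrated here, whereas the odd-prime case and the reduction to $N\mid q$ are comparatively routine.
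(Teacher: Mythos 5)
First, a point of comparison: the paper does not actually prove this theorem — it is quoted without proof from \cite{garcia09} — so there is no internal argument to measure yours against. Your reduction is the standard route and you carry it out correctly: from $x/N=c_x/(b^{kd}-1)$ one gets $S_d(x)\equiv c_x\pmod{b^k-1}$, and since $c_x/(b^k-1)=xq/N$ with $q=(b^{kd}-1)/(b^k-1)$ and $\gcd(x,N)=1$, the Midy condition for all $x$ collapses to the single divisibility $N\mid q$. The prime-by-prime analysis is also right: primes not dividing $b^k-1$ impose no constraint because $N\mid b^{kd}-1$, and for odd $p\mid\gcd(b^k-1,N)$ the lifting-the-exponent lemma gives $\nu_p(q)=\nu_p(d)$, which is exactly the stated criterion.

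Your worry about $p=2$ is not a technical loose end you failed to close; it is a genuine counterexample to the theorem as stated, so the gap cannot be filled. Take $b=3$ and $N=4$, so that $|3|_4=2$, $k=1$, $d=2$. In base $3$ one has $1/4=0.\overline{02}$ and $3/4=0.\overline{20}$, and $S_2(x)=2$ is divisible by $b^k-1=2$ in both cases, so $2\in\mathcal{M}_3(4)$; equivalently $q=(3^2-1)/(3-1)=4$ and $N\mid q$. Yet $p=2$ divides $\gcd(b^k-1,N)=2$ and $\nu_2(N)=2>1=\nu_2(d)$, so the theorem predicts $2\notin\mathcal{M}_3(4)$. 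What your argument actually proves is the criterion $\nu_p(N)\le\nu_p(q)$ for every prime $p$ dividing $\gcd(b^k-1,N)$; this agrees with $\nu_p(N)\le\nu_p(d)$ for all odd $p$, and at $p=2$ it agrees only when $d$ is odd, or $b^k\equiv1\pmod 4$, or $\nu_2(N)\le1$. The discrepancy occurs precisely when $4\mid N$, $d$ is even, $b^k\equiv3\pmod4$ and $\nu_2(d)<\nu_2(N)\le\nu_2(d)+\nu_2(b^k+1)-1$. So your plan is complete and correct except that the statement itself must be amended (e.g., restrict to odd $N$, or replace $\nu_p(d)$ by $\nu_p(q)$) before the $p=2$ case can be finished.
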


The next theorem is a different way to write Theorem \ref{ppl2}.

\begin{theorem}
\label{ppl3}Let $N$ be a positive integer and $d$ a divisor of
$\left\vert b\right\vert _{N}$. \ The following statements are
equivalent

\begin{enumerate}
\item $d\in\mathcal{M}_{b}(N)$

\item For each prime divisor $p$ of $N$ such that $\nu_{p}\left(  N\right)
>\nu_{p}\left(  d\right)  $, there exists a prime $q$ divisor of $\left\vert
b\right\vert _{N}$ that satisfies $\nu_{q}\left(  \left\vert
b\right\vert _{p}\right)  >\nu_{q}\left(  \left\vert b\right\vert
_{N}\right)  -\nu _{q}\left(  d\right)  $.
\end{enumerate}
\end{theorem}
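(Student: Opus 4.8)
The plan is to deduce Theorem \ref{ppl3} directly from Theorem \ref{ppl2}, by showing that statement (2) is nothing more than a prime-by-prime reformulation of the valuation inequality appearing in Theorem \ref{ppl2}. Since Theorem \ref{ppl2} asserts that $d\in\mathcal{M}_b(N)$ if and only if $\nu_p(N)\le\nu_p(d)$ for every prime $p\mid(b^k-1,\,N)$, I would work with the negations and establish that $d\notin\mathcal{M}_b(N)$ if and only if there is a prime $p\mid N$ with $\nu_p(N)>\nu_p(d)$ for which the existential clause in (2) fails.

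First I would record two elementary order-divisibility facts. For a prime $p\mid N$ one has $|b|_p\mid|b|_N$, since $b^{|b|_N}\equiv 1\pmod N$ forces $b^{|b|_N}\equiv 1\pmod p$; and, by definition of the order, $p\mid b^k-1$ is equivalent to $|b|_p\mid k$. Writing $k=|b|_N/d$ (valid because $d\mid|b|_N$), the divisibility $|b|_p\mid k$ translates, prime by prime, into the system of inequalities $\nu_q(|b|_p)\le\nu_q(|b|_N)-\nu_q(d)$ ranging over all primes $q$. Because $|b|_p\mid|b|_N$, only the primes $q\mid|b|_N$ can matter, and $d\mid|b|_N$ guarantees each right-hand side is nonnegative, so the system is well posed. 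Negating it yields exactly the existential statement in (2): there is a prime $q\mid|b|_N$ with $\nu_q(|b|_p)>\nu_q(|b|_N)-\nu_q(d)$. Thus, for a fixed prime $p\mid N$ with $\nu_p(N)>\nu_p(d)$, the clause in (2) holds precisely when $p\nmid b^k-1$.

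Combining these observations, condition (2) becomes the assertion that no prime $p$ simultaneously satisfies $p\mid N$, $\nu_p(N)>\nu_p(d)$, and $p\mid b^k-1$; equivalently, $\nu_p(N)\le\nu_p(d)$ for every prime $p\mid(b^k-1,\,N)$. By Theorem \ref{ppl2} this last condition is exactly equivalent to $d\in\mathcal{M}_b(N)$, which closes the loop between (1) and (2).

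The step I expect to be the main (though modest) obstacle is the bookkeeping around the translation of $|b|_p\mid k$ into the valuation inequalities and their negation, in particular confirming that restricting the index $q$ to primes dividing $|b|_N$ loses nothing and that the quantifier alternation is handled correctly when passing to contrapositives. Everything else is a direct application of Theorem \ref{ppl2} together with the standard facts that $|b|_p\mid|b|_N$ and that $p\mid b^k-1$ iff $|b|_p\mid k$.
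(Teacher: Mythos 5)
Your proposal is correct and is essentially the paper's own (implicit) argument: the paper simply states that Theorem \ref{ppl3} is ``a different way to write'' Theorem \ref{ppl2}, and your prime-by-prime translation of $|b|_p\mid k$ into the valuation inequalities, together with the observation that any witness $q$ must divide $d$ (hence $|b|_N$), is exactly the bookkeeping that justifies that claim. Nothing further is needed.
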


In \cite{trio} the authors prove the following theorem.

\begin{theorem}
\label{coro}Let $d_{1}$, $d_{2}$ be divisors of $\left\vert
b\right\vert _{N}$ and assume that $d_{1}\mid d_{2}$ and
$d_{1}\in\mathcal{M}_{b}(N)$, then $d_{2}\in\mathcal{M}_{b}(N)$.
\end{theorem}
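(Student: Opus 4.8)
The plan is to derive Theorem~\ref{coro} as a direct consequence of the characterization in Theorem~\ref{ppl2}, since that theorem reduces membership in the Midy's set to a collection of inequalities among $p$-adic valuations. The key observation is that the hypothesis $d_1 \mid d_2$ translates, via elementary number theory, into the valuation inequality $\nu_p(d_1) \le \nu_p(d_2)$ for \emph{every} prime $p$; this is the engine that will let us push the Midy's property from $d_1$ up to $d_2$.

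First I would fix an arbitrary prime divisor $p$ of the gcd $(b^{k_2}-1,\, N)$, where $k_2 = \left\vert b\right\vert_N / d_2$, since by Theorem~\ref{ppl2} it suffices to show $\nu_p(N) \le \nu_p(d_2)$ for each such $p$ in order to conclude $d_2 \in \mathcal{M}_b(N)$. Writing $k_1 = \left\vert b\right\vert_N / d_1$, the divisibility $d_1 \mid d_2$ forces $k_2 \mid k_1$, and hence $b^{k_2}-1 \mid b^{k_1}-1$. Therefore any prime $p$ dividing $(b^{k_2}-1,\, N)$ also divides $(b^{k_1}-1,\, N)$. Applying the hypothesis $d_1 \in \mathcal{M}_b(N)$ through Theorem~\ref{ppl2} to this prime $p$ then yields $\nu_p(N) \le \nu_p(d_1)$.

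To finish, I would chain the two inequalities: combining $\nu_p(N) \le \nu_p(d_1)$ with $\nu_p(d_1) \le \nu_p(d_2)$ (the latter being immediate from $d_1 \mid d_2$) gives $\nu_p(N) \le \nu_p(d_2)$. Since $p$ was an arbitrary prime divisor of $(b^{k_2}-1,\, N)$, Theorem~\ref{ppl2} applied in the forward direction delivers $d_2 \in \mathcal{M}_b(N)$, which is exactly the claim.

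The main obstacle I anticipate is purely bookkeeping rather than conceptual: one must be careful to verify the divisibility chain $k_2 \mid k_1$ and the corresponding $b^{k_2}-1 \mid b^{k_1}-1$ cleanly, and to confirm that a prime dividing the smaller gcd genuinely transfers to the larger one so that Theorem~\ref{ppl2} can be invoked with the correct value of $k$. Once the indexing of $k_1$ versus $k_2$ is nailed down and the direction of the divisibility among the $b^k - 1$ is correct, the argument is essentially a two-line valuation comparison; no deeper structural input beyond Theorem~\ref{ppl2} should be required.
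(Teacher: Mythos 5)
Your proof is correct. The paper itself does not prove Theorem~\ref{coro}; it only quotes it from \cite{trio}, so there is no in-text argument to compare against. Your derivation is a clean, self-contained deduction from Theorem~\ref{ppl2}: the chain $d_1\mid d_2 \Rightarrow k_2\mid k_1 \Rightarrow b^{k_2}-1\mid b^{k_1}-1$ correctly transfers any prime $p\mid(b^{k_2}-1,N)$ to a prime divisor of $(b^{k_1}-1,N)$, where the hypothesis $d_1\in\mathcal{M}_b(N)$ gives $\nu_p(N)\le\nu_p(d_1)\le\nu_p(d_2)$, and Theorem~\ref{ppl2} then yields $d_2\in\mathcal{M}_b(N)$. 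The only cosmetic point worth noting is that the definition of the Midy property in the paper requires $d>1$, which is automatic here since $d_1\in\mathcal{M}_b(N)$ forces $d_1>1$ and hence $d_2>1$; no gap results.
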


The following result has a big influence on our work, it is Theorem
3.6 in \cite{Nathanson}.

\begin{theorem}
\label{poten}Let $p$ be an odd prime not divisor of $b$, $m=\nu_{p}%
(b^{\left\vert b\right\vert _{p}}-1)$ and let $t$ a positive
integer, then
\[
\left\vert b\right\vert _{p^{t}}=%
\begin{cases}
\left\vert b\right\vert _{p} & \text{ if }t\leq m,\\
& \\
p^{t-m}\left\vert b\right\vert _{p} & \text{ if \ }t>m.
\end{cases}
\]

\end{theorem}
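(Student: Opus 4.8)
The plan is to reduce the computation of $\left\vert b\right\vert_{p^t}$ to a single $p$-adic valuation by means of a lifting-the-exponent argument. Write $n=\left\vert b\right\vert_p$, so that $p\mid b^n-1$ and, by the definition of $m$, we have $\nu_p(b^n-1)=m\geq 1$. The whole theorem will follow once we control $\nu_p(b^{ns}-1)$ as a function of $s$.

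First I would show that $n$ divides $\left\vert b\right\vert_{p^t}$ for every $t\geq 1$. Indeed, if $b^e\equiv 1\pmod{p^t}$ then reducing modulo $p$ gives $b^e\equiv 1\pmod p$, whence $n\mid e$; applying this to $e=\left\vert b\right\vert_{p^t}$ yields $n\mid\left\vert b\right\vert_{p^t}$. Consequently $\left\vert b\right\vert_{p^t}=ns$ for some positive integer $s$, and the task becomes that of finding the smallest $s$ for which $p^t\mid b^{ns}-1$, that is, the smallest $s$ with $\nu_p(b^{ns}-1)\geq t$.

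The key step is the valuation identity
\[
\nu_p(b^{ns}-1)=m+\nu_p(s),\qquad s\geq 1.
\]
Setting $a=b^n$, this is exactly the lifting-the-exponent statement $\nu_p(a^s-1)=\nu_p(a-1)+\nu_p(s)$ under the hypothesis $p\mid a-1$. I would prove it by first treating $s=p$: writing $a=1+p^m u$ with $p\nmid u$ and expanding $a^p=(1+p^m u)^p$ by the binomial theorem, one checks that the term $\binom{p}{1}p^m u=p^{m+1}u$ has valuation exactly $m+1$ while every later term has valuation at least $m+2$, where the oddness of $p$ is what forces $p\mid\binom{p}{2}$ and hence $\nu_p\!\left(\binom{p}{2}p^{2m}u^2\right)\geq m+2$; this gives $\nu_p(a^p-1)=m+1$. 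Iterating handles $s=p^r$, and factoring $a^s-1=\bigl(a^{\,s/p^{\nu_p(s)}}\bigr)^{p^{\nu_p(s)}}-1$ together with the elementary fact that $\nu_p(a^c-1)=m$ whenever $p\nmid c$ (the cofactor $a^{c-1}+\cdots+1\equiv c\pmod p$ is a unit) yields the general formula.

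Finally I would read off the two cases. We seek the least $s$ with $m+\nu_p(s)\geq t$. If $t\leq m$ then $s=1$ already works, so $\left\vert b\right\vert_{p^t}=n=\left\vert b\right\vert_p$. If $t>m$ then we need $\nu_p(s)\geq t-m$, and the smallest such positive integer is $s=p^{t-m}$, giving $\left\vert b\right\vert_{p^t}=p^{t-m}n=p^{t-m}\left\vert b\right\vert_p$. The main obstacle is the valuation identity, and specifically the place where the hypothesis that $p$ is odd is indispensable: the binomial estimate for $\nu_p(a^p-1)$ degenerates at $p=2$, which is precisely why the theorem is restricted to odd primes.
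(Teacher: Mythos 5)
Your proof is correct: the reduction to finding the least $s$ with $\nu_p(b^{ns}-1)\geq t$, the lifting-the-exponent identity $\nu_p(b^{ns}-1)=m+\nu_p(s)$ proved via the binomial expansion of $(1+p^mu)^p$ (where $p$ odd and $m\geq 1$ give the needed $\nu_p\geq m+2$ for the higher terms), and the final case analysis are all sound. The paper itself states this result without proof, citing Theorem 3.6 of Nathanson, whose argument is essentially the same inductive/binomial computation, so your approach matches the standard one.
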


The Dirichlet's theorem about primes in arithmetic progression
states that there are infinitely many primes in any arithmetic
progression of initial term  $a$ and common difference $l$, with
$(a,l)=1$. It was conjectured by Carl Friedrich Gauss and it was
first proved in 1826 by Peter Gustav Lejeune Dirichlet. The theorem
which establishes that there are infinitely many primes is a
particular case of the Dirichlet's theorem, taking $a=1$ and $l=1$.

A related result, to the Dirichlet's theorem, proved by B. Green and
T. Tao in 2004, guarantees that we can find lists of primes in
arithmetic progression of arbitrary length; see \cite{GreenTao}.

The biggest list  of primes in arithmetic progression has 26 terms,
it was founded in April 12, 2010 by Benot Perichon and PrimeGrid. It
is formed by the numbers $43142746595714191 + 23681770\cdot
23\#\cdot n$, for $0 \leq n \leq 25$ where $a\#$  denotes the
primorial of $a$, it is the product of all primes less or equal to
$a$; see \cite{PAR}.
\section{Other characterizations of Midy's Property.}

In this section, we will study some consequences of Theorem
\ref{ppl3}.

\begin{theorem}
\label{guel}Let $N$ be a positive integer and $\ \left\vert
b\right\vert _{N}=kd$. \ If, for all prime divisor $p$ of $N$, we
have $\nu_{p}\left( N\right)  >\nu_{p}\left(  d\right)  $, then the
followings statements are equivalent

\begin{enumerate}
\item $\left(  b^{k}-1,\ N\right)  =1$

\item $d\in\mathcal{M}_{b}(N)$

\item For each prime divisor $p$ of $N$, there exists a prime $q$ divisor of
$d$ such that $\nu_{q}\left(  |b|_{p}\right)  >\nu_{q}\left(
|b|_{N}\right) -\nu_{q}\left(  d\right)  \text{.}$
\end{enumerate}
\end{theorem}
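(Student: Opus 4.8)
The plan is to prove Theorem~\ref{guel} as a cycle of implications $(1)\Rightarrow(2)\Rightarrow(3)\Rightarrow(1)$, leaning heavily on Theorem~\ref{ppl3} and Theorem~\ref{poten}, since the hypothesis $\nu_p(N)>\nu_p(d)$ for \emph{every} prime divisor $p$ of $N$ puts us exactly in the situation where the quantifier ``for each prime $p$ with $\nu_p(N)>\nu_p(d)$'' in Theorem~\ref{ppl3} ranges over all prime divisors of $N$.

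First I would establish $(1)\Rightarrow(2)$. Assuming $(b^k-1,N)=1$, the set of prime divisors of $(b^k-1,N)$ is empty, so the criterion of Theorem~\ref{ppl2}, namely $\nu_p(N)\le\nu_p(d)$ for all $p\mid(b^k-1,N)$, is vacuously satisfied; hence $d\in\mathcal{M}_b(N)$. Next, for $(2)\Rightarrow(3)$, I would invoke Theorem~\ref{ppl3}: since $d\in\mathcal{M}_b(N)$ and, by hypothesis, every prime divisor $p$ of $N$ satisfies $\nu_p(N)>\nu_p(d)$, condition~(2) of Theorem~\ref{ppl3} applies to each such $p$ and yields a prime $q\mid\left\vert b\right\vert_N$ with $\nu_q(\left\vert b\right\vert_p)>\nu_q(\left\vert b\right\vert_N)-\nu_q(d)$. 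To land in statement~(3) I must upgrade ``$q\mid\left\vert b\right\vert_N$'' to ``$q\mid d$''; this is where Theorem~\ref{poten} enters. Writing $\left\vert b\right\vert_N=\mathrm{lcm}$ of the $\left\vert b\right\vert_{p^{\nu_p(N)}}$ and using $\left\vert b\right\vert_{p^t}=p^{t-m}\left\vert b\right\vert_p$ for $t>m$, one sees that the extra $q$-content of $\left\vert b\right\vert_N$ beyond $\left\vert b\right\vert_p$ is exactly what must be absorbed by $d$, forcing $\nu_q(d)>0$.

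The main obstacle will be the implication $(3)\Rightarrow(1)$, proved by contrapositive. Suppose $(b^k-1,N)\ne1$ and pick a prime $p\mid(b^k-1,N)$. From $p\mid b^k-1$ I get $\left\vert b\right\vert_p\mid k$, and I must derive a violation of the inequality in~(3) for \emph{that} particular $p$, i.e. show $\nu_q(\left\vert b\right\vert_p)\le\nu_q(\left\vert b\right\vert_N)-\nu_q(d)$ for every prime $q\mid d$. The key arithmetic input is the relation $\left\vert b\right\vert_N=kd$ together with the divisibility $\left\vert b\right\vert_p\mid k$, which gives $\nu_q(\left\vert b\right\vert_p)\le\nu_q(k)=\nu_q(\left\vert b\right\vert_N)-\nu_q(d)$ directly; the delicate point is checking that the hypothesis $\nu_p(N)>\nu_p(d)$ does not let some prime $q$ slip through the factorization, which again is controlled by Theorem~\ref{poten} describing precisely how primes dividing $N$ contribute to $\left\vert b\right\vert_N$.

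Assembling these three implications closes the cycle and proves the equivalence. I expect the bookkeeping of $q$-adic valuations across $\left\vert b\right\vert_p$, $\left\vert b\right\vert_N$, $k$, and $d$ to be the only genuinely technical part, and Theorem~\ref{poten} to be the tool that makes every such comparison transparent.
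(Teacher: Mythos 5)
Your proposal is correct, but it closes the equivalence along a different route than the paper. The paper proves two separate equivalences: $(2)\Leftrightarrow(3)$ is read off directly from Theorem~\ref{ppl3} (under the blanket hypothesis the quantifier in that theorem ranges over all prime divisors of $N$), and $(1)\Leftrightarrow(2)$ is handled entirely by Theorem~\ref{ppl2} --- in particular $(2)\Rightarrow(1)$ is a one-line contradiction: a prime $p\mid(b^{k}-1,N)$ would satisfy $0<\nu_{p}(N)\le\nu_{p}(d)$ by Theorem~\ref{ppl2}, against the hypothesis. You instead run a cycle and prove $(3)\Rightarrow(1)$ by a direct valuation computation: $p\mid(b^{k}-1,N)$ forces $\left\vert b\right\vert_{p}\mid k$, hence $\nu_{q}\left(\left\vert b\right\vert_{p}\right)\le\nu_{q}(k)=\nu_{q}\left(\left\vert b\right\vert_{N}\right)-\nu_{q}(d)$ for every prime $q$, which negates (3) at that $p$. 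That argument is clean and elementary, and --- unlike the paper's $(2)\Rightarrow(1)$ --- it does not even use the hypothesis $\nu_{p}(N)>\nu_{p}(d)$; in your arrangement the hypothesis is consumed only in the step $(2)\Rightarrow(3)$. Two places where you work harder than necessary: the upgrade from ``$q\mid\left\vert b\right\vert_{N}$'' to ``$q\mid d$'' in $(2)\Rightarrow(3)$ needs only the divisibility $\left\vert b\right\vert_{p}\mid\left\vert b\right\vert_{N}$, since then $\nu_{q}(d)>\nu_{q}\left(\left\vert b\right\vert_{N}\right)-\nu_{q}\left(\left\vert b\right\vert_{p}\right)\ge0$, with no appeal to Theorem~\ref{poten}; and the ``delicate point'' you anticipate in $(3)\Rightarrow(1)$ does not arise, because $\nu_{q}\left(\left\vert b\right\vert_{p}\right)\le\nu_{q}(k)$ holds for all primes $q$ with no case analysis. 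Theorem~\ref{poten} is in fact not needed anywhere in this proof.
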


\begin{proof}
The equivalence between (2) and (3) is immediate from Theorem
\ref{ppl3}.

By Theorem \ref{ppl2} we get that (1) implies (2). Now we prove that
(2) implies (1). \ Suppose that $d\in\mathcal{M}_{b}(N)$ and let
$g=\left( b^{k}-1,\ N\right)  $. If there exists a prime divisor $p$
of $g$, from Theorem \ref{ppl2}, we have $0<\nu_{p}\left(  N\right)
\leq\nu_{p}\left( d\right)  $ and this is impossible because
$\nu_{p}\left(  d\right)  <\nu _{p}\left(  N\right)  $. Therefore
$\left(  b^{k}-1,\ N\right)  =1$.
\end{proof}

We now will study Theorem 4 of \cite{Lewittes}, which is attributed
by its author to M. Jenkins (1867). Let $p_{1},p_{2}\ldots,p_{t}$ be
different primes
such that $d\in\mathcal{M}_{b}(p_{i})$ \ for each $i$ and let $h_{1}%
,h_{2},\ldots,h_{t}$ be positive integers, when does $N=p_{1}^{h_{1}}%
p_{2}^{h_{2}}\cdots p_{t}^{h_{t}}$ have the Midy's property for $b$
and $d$?. The Jenkins' Theorem gives the answer to this question and
the same is independent from the $h_{i}$'s.

By simplicity we go to study the case when $t=3$, but the argument
is true for any $t$. Let $p_{1},p_{2}, p_{3}$ be different primes
such that $d\in
\mathcal{M}_{b}(p_{i})$, $\left\vert b\right\vert _{p_{i}}=dk_{i}$, $m_{i}%
=\nu_{p_{i}}\left(  b^{\left\vert b\right\vert _{p_{i}}}-1\right)  $
for $i=1,2,3$ then
\[
\left\vert b\right\vert _{N}=d\left[  p_{1}^{h_{1}-m_{1}}k_{1}\text{, }%
p_{2}^{h_{2}-m_{2}}k_{2}\text{, }p_{3}^{h_{3}-m_{3}}k_{3}\right]  =dk\text{.}%
\]
We have to check up the prime divisors of $\left(  b^{k}-1\text{,
}N\right) $, to determine when $d\in\mathcal{M}_{b}(N)$. As $d$ is a
divisor of $\left\vert b\right\vert _{p_{1}}$ thus $d\leq p_{1}-1$
and if, say, $p_{1}\mid$ $\left(  b^{k}-1\text{, }N\right)  $ we get
that $h_{1}=\nu _{p_{1}}\left(  N\right)  >0=\nu_{p_{1}}\left(
d\right)  $ and so $N$ does not have the Midy's property for $b$ and
$d$. In consequence $d\in \mathcal{M}_{b}(N)$ if and only if $\left(
b^{k}-1\text{, }N\right)  =1$. It is clear that $\left(
b^{k}-1\text{, }N\right)  =1$ is equivalent to say that for each
$i$, $\left\vert b\right\vert _{p_{i}}\nmid k$. We will see when
this fact is verified. Let $d=\prod\limits_{i=1}^{s}q_{i}^{r_{i}}$
be the prime decomposition of $d$, for each $i=1,2,3$ take
$c_{i}=\nu_{d}\left( k_{i}\right)  $ and
\begin{align*}
p^{h_{1}-m_{1}}k_{1}  &  =\left(  d^{c_{1}}\prod\limits_{i=1}^{s}q_{i}%
^{\alpha_{i}^{\left(  1\right)  }}\right)  y_{1}\\
p^{h_{2}-m_{2}}k_{2}  &  =\left(  d^{c_{2}}\prod\limits_{i=1}^{s}q_{i}%
^{\alpha_{i}^{\left(  2\right)  }}\right)  y_{2}\\
p^{h_{3}-m_{3}}k_{3}  &  =\left(  d^{c_{3}}\prod\limits_{i=1}^{s}q_{i}%
^{\alpha_{i}^{\left(  3\right)  }}\right)  y_{3}%
\end{align*}
where $(q_{i},y_{j})=1$ for $i=1,2,\ldots, s$ and $j=1,2,3$. Then
\begin{align*}
k  &  =\left[  p^{h_{1}-m_{1}}k_{1}\text{,
}p^{h_{2}-m_{2}}k_{2}\text{,
}p^{h_{3}-m_{3}}k_{3}\right] \\
&  =\left[  d^{c_{1}}\prod\limits_{i=1}^{s}q_{i}^{\alpha_{i}^{\left(
1\right)  }}\text{,
}d^{c_{2}}\prod\limits_{i=1}^{s}q_{i}^{\alpha_{i}^{\left( 2\right)
}}\text{, }d^{c_{3}}\prod\limits_{i=1}^{s}q_{i}^{\alpha_{i}^{\left(
3\right)  }}\right]  \left[  y_{1}\text{, }y_{2}\text{,
}y_{3}\right]
\text{.}%
\end{align*}
We want, as was said before, that $k$ not be divisible for any
$\left\vert b\right\vert _{p_{i}}$, $i=1,2,3$. Now if, say,
$k=\left\vert b\right\vert _{p_{1}}l$ with $l$ integer, then
$k=\left\vert b\right\vert _{p_{1}}l=\left( k_{1}d\right)  l$
therefore $\frac{k}{k_{1}}$ is a multiple of $d$, and as the
$y_{i}$'s are relatively primes with $d$ this is equivalent to
\[
\frac{\left[
d^{c_{1}}\prod\limits_{i=1}^{s}q_{i}^{\alpha_{i}^{\left( 1\right)
}}\text{, }d^{c_{2}}\prod\limits_{i=1}^{s}q_{i}^{\alpha_{i}^{\left(
2\right)  }}\text{,
}d^{c_{3}}\prod\limits_{i=1}^{s}q_{i}^{\alpha_{i}^{\left( 3\right)
}}\right]  }{d^{c_{1}}\prod\limits_{i=1}^{s}q_{i}^{\alpha
_{i}^{\left(  1\right)  }}}\equiv0\ \operatorname{mod}\ d
\]

From the above analysis follows the next theorem.

\begin{theorem}
[Jenkins' Theorem]Let $p_{1},p_{2},\ldots, p_{t}$ be different
primes such that $d\in\mathcal{M}_{b}(p_{i})$ for each $i$, let
$h_{1},h_{2}, \ldots, h_{t}$ be positive integers and
$N=p_{1}^{h_{1}}p_{2}^{h_{2}}\cdots p_{t}^{h_{t}}$. With the
notations introduced above we obtain that $d\in\mathcal{M}_{b}(N)$
if and only if, for each $j=1,2,\ldots,t$ is satisfied
\[
\frac{\left[
d^{c_{1}}\prod\limits_{i=1}^{s}q_{i}^{\alpha_{i}^{\left( 1\right)
}}\text{, }d^{c_{2}}\prod\limits_{i=1}^{s}q_{i}^{\alpha_{i}^{\left(
2\right)  }}\text{, \ldots,
}d^{c_{t}}\prod\limits_{i=1}^{s}q_{i}^{\alpha
_{i}^{\left(  t\right)  }}\right]  }{d^{c_{t}}\prod\limits_{i=1}^{s}%
q_{i}^{\alpha_{i}^{\left(  j\right)  }}}\not \equiv 0\
\operatorname{mod}\ d
\]

\end{theorem}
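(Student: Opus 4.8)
The plan is to strip the hypotheses down to a coprimality condition, reduce that condition to a non-divisibility of least common multiples, and finally read it off one prime of $d$ at a time. First I would record two elementary facts. Since $d\in\mathcal{M}_{b}(p_{i})$ forces $d\mid|b|_{p_{i}}$ and $|b|_{p_{i}}$ divides $p_{i}-1$, we get $d<p_{i}$, so $\nu_{p_{i}}(d)=0$ for every $i$; in particular the primes $p_{1},\dots,p_{t}$ are coprime to $d$. Applying Theorem \ref{poten} to each prime power $p_{i}^{h_{i}}$ and using that $N$ is a product of coprime factors gives $|b|_{N}=[\,|b|_{p_{1}^{h_{1}}},\dots,|b|_{p_{t}^{h_{t}}}\,]=d\,[\,p_{1}^{h_{1}-m_{1}}k_{1},\dots,p_{t}^{h_{t}-m_{t}}k_{t}\,]=dk$, with the convention $p_{i}^{h_{i}-m_{i}}=1$ whenever $h_{i}\le m_{i}$. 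Because $\nu_{p_{i}}(N)=h_{i}>0=\nu_{p_{i}}(d)$ for all $i$, the hypothesis of Theorem \ref{guel} is met, so $d\in\mathcal{M}_{b}(N)$ if and only if $(b^{k}-1,N)=1$. As the prime divisors of $N$ are exactly the $p_{i}$, and $p_{i}\mid b^{k}-1$ precisely when $|b|_{p_{i}}\mid k$, this is equivalent to demanding $|b|_{p_{j}}\nmid k$ for every $j=1,\dots,t$.

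Next I would convert each condition $|b|_{p_{j}}\mid k$ into a divisibility by $d$. Write $K_{i}=p_{i}^{h_{i}-m_{i}}k_{i}$, so that $k=[K_{1},\dots,K_{t}]$ and $k_{j}\mid K_{j}\mid k$. Since $|b|_{p_{j}}=dk_{j}$ and $k_{j}\mid k$, the relation $|b|_{p_{j}}\mid k$ holds if and only if $d\mid k/k_{j}$. To analyze this I use the factorizations $K_{i}=D_{i}y_{i}$ introduced in the discussion above, where $D_{i}=d^{c_{i}}\prod_{l=1}^{s}q_{l}^{\alpha_{l}^{(i)}}$ is the part of $K_{i}$ supported on the primes $q_{1},\dots,q_{s}$ dividing $d$ and $(y_{i},q_{l})=1$ for all $l$ (here I index the primes of $d$ by $l$ to avoid clashing with the block index $i$); note that the factor $p_{i}^{h_{i}-m_{i}}$, being coprime to $d$, is absorbed into $y_{i}$. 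Because the $q_{l}$ share no prime with any $y_{i}$, the least common multiple splits as $k=[D_{1},\dots,D_{t}]\,[y_{1},\dots,y_{t}]=LY$ with $Y$ coprime to $d$.

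The decisive step is the valuation comparison at each prime $q_{l}$ of $d$. Here $\nu_{q_{l}}(k)=\nu_{q_{l}}(L)=\max_{i}\nu_{q_{l}}(D_{i})$ and $\nu_{q_{l}}(k_{j})=\nu_{q_{l}}(K_{j})=\nu_{q_{l}}(D_{j})$, since $Y$ and the powers $p_{i}^{h_{i}-m_{i}}$ contribute nothing at $q_{l}$. Consequently $d\mid k/k_{j}$ if and only if $\nu_{q_{l}}(d)+\nu_{q_{l}}(D_{j})\le\nu_{q_{l}}(L)$ for every $l$, that is, if and only if $d\mid L/D_{j}=[D_{1},\dots,D_{t}]/D_{j}$ (an integer, since $D_{j}\mid L$). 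Chaining the equivalences, $|b|_{p_{j}}\nmid k$ is the same as $[D_{1},\dots,D_{t}]/D_{j}\not\equiv0\pmod d$, and imposing this for every $j$ yields the stated criterion. I expect the only delicate point to be the bookkeeping of this last paragraph: one must be sure the lcm genuinely factors as $LY$ and that each valuation $\nu_{q_{l}}$ sees only the $D_{i}$, both of which rest on the two coprimality facts $p_{i}\nmid d$ and $(y_{i},d)=1$ set up at the start.
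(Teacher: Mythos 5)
Your proof is correct and follows essentially the same route as the paper: reduce to $(b^{k}-1,N)=1$ via $\nu_{p_i}(N)>0=\nu_{p_i}(d)$, translate that into $|b|_{p_j}\nmid k$ for each $j$, and then split the lcm into the part supported on the primes of $d$ and a cofactor coprime to $d$. The only cosmetic difference is that you invoke Theorem \ref{guel} where the paper re-derives that equivalence directly from Theorem \ref{ppl2}, and you spell out the valuation bookkeeping that the paper leaves implicit.
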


Our next result has a similar flavor of the Jenkins' Theorem,
although its statement and proof are simpler.

\begin{theorem}
Let $N,q,v$ be integers with $q$ prime and $v>0$. Then $q^{v}\in
\mathcal{M}_{b}(N)$ if and only if
$N=q^{n}p_{1}^{h_{1}}p_{2}^{h_{2}}\cdots p_{l}^{h_{l}}$ where $n$ is
a non-negative integer, $p_{i}$'s are different primes and $h_{i}$'s
are non-negatives integers not all zero, verifying $0\leq n\leq v$,
$\nu_{q}(\left\vert b\right\vert _{p_{i}}) >0$ and
\[
\max\limits_{1\leq i \leq l}\left\{  n-m\text{, }\nu_{q}( \left\vert
b\right\vert _{p_{i}}) \right\}  -v<\min\limits_{1\leq i \leq
l}\left\{
\nu_{q}( \left\vert b\right\vert _{p_{i}}) \right\}  \text{ }%
\]
where $m=\nu_{q}(b^{\left\vert b\right\vert _{q}}-1)$.
\end{theorem}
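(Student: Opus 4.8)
The plan is to reduce everything to the criterion of Theorem \ref{ppl2} applied to the divisor $d=q^{v}$ of $|b|_{N}$, after first pinning down $\nu_{q}(|b|_{N})$ by means of Theorem \ref{poten}. Writing the factorization $N=q^{n}p_{1}^{h_{1}}\cdots p_{l}^{h_{l}}$ with the $p_{i}$ distinct primes different from $q$, I would use that $|b|_{N}$ is the least common multiple of $|b|_{q^{n}}$ and the $|b|_{p_{i}^{h_{i}}}$. Since the extra factor that Theorem \ref{poten} attaches to a prime power is a power of that same prime, only $q^{n}$ and those $p_{i}^{h_{i}}$ whose order is divisible by $q$ can contribute to $\nu_{q}(|b|_{N})$. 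Using $\nu_{q}(|b|_{q})=0$ (because $|b|_{q}\mid q-1$), Theorem \ref{poten} gives $\nu_{q}(|b|_{q^{n}})=\max\{0,n-m\}$ and $\nu_{q}(|b|_{p_{i}^{h_{i}}})=\nu_{q}(|b|_{p_{i}})$, so that, with $P=\max_{i}\nu_{q}(|b|_{p_{i}})$,
\[
\nu_{q}(|b|_{N})=\max\{0,\ n-m,\ P\}.
\]
At the outset I would also record that $q^{v}\in\mathcal{M}_{b}(N)$ forces $q^{v}\mid|b|_{N}$, i.e.\ $v\le\nu_{q}(|b|_{N})$.

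Next I would set $k=|b|_{N}/q^{v}$ and unpack Theorem \ref{ppl2}: $q^{v}\in\mathcal{M}_{b}(N)$ iff every prime divisor $p$ of $(b^{k}-1,\ N)$ satisfies $\nu_{p}(N)\le\nu_{p}(q^{v})$. The primes to test are $q$ and the $p_{i}$. For $p=q$ one checks that $|b|_{q}\mid k$ whenever $q\mid N$ (its $q$-valuation is $0\le\nu_{q}(k)=\nu_{q}(|b|_{N})-v$, and every other prime valuation of $|b|_{q}$ is dominated because $|b|_{q}\mid|b|_{N}$), so $q\mid b^{k}-1$; hence the condition at $q$ is exactly $n=\nu_{q}(N)\le v$, which is vacuous when $n=0$. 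For each $p_{i}$ we have $\nu_{p_{i}}(q^{v})=0<h_{i}$, so the criterion can hold only if $p_{i}\nmid b^{k}-1$, that is, $|b|_{p_{i}}\nmid k$.

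The technical heart, which I expect to be the main obstacle, is to show that $|b|_{p_{i}}\nmid k$ is a statement about $q$-valuations alone. For every prime $r\ne q$ one has $\nu_{r}(k)=\nu_{r}(|b|_{N})\ge\nu_{r}(|b|_{p_{i}})$ because $|b|_{p_{i}}\mid|b|_{N}$, so the only possible obstruction to $|b|_{p_{i}}\mid k$ sits at the prime $q$. Thus $|b|_{p_{i}}\mid k\iff\nu_{q}(|b|_{p_{i}})\le\nu_{q}(k)=\nu_{q}(|b|_{N})-v$, and the requirement $|b|_{p_{i}}\nmid k$ for all $i$ collapses to $\min_{i}\nu_{q}(|b|_{p_{i}})>\nu_{q}(|b|_{N})-v$. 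The delicate point here is keeping the contribution of $q$ itself, namely the $\max\{0,n-m\}$ term produced by Theorem \ref{poten}, correctly tracked inside $\nu_{q}(|b|_{N})$.

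Finally I would assemble the pieces. Combining $v\le\nu_{q}(|b|_{N})$ with $\min_{i}\nu_{q}(|b|_{p_{i}})>\nu_{q}(|b|_{N})-v\ge0$ yields $\nu_{q}(|b|_{p_{i}})>0$ for every $i$, which is the second stated condition; it also forces $P\ge1$, so the $0$ drops out of the maximum and $\nu_{q}(|b|_{N})=\max\{n-m,\ P\}=\max_{i}\{n-m,\ \nu_{q}(|b|_{p_{i}})\}$. Substituting this into $\min_{i}\nu_{q}(|b|_{p_{i}})>\nu_{q}(|b|_{N})-v$ reproduces exactly the displayed inequality of the statement, while the analysis at $q$ gives $0\le n\le v$. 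Conversely, reading these equivalences backwards recovers both $q^{v}\mid|b|_{N}$ and the divisibility test of Theorem \ref{ppl2}, establishing $q^{v}\in\mathcal{M}_{b}(N)$; the only care needed is to verify that the retained divisibility $q^{v}\mid|b|_{N}$ is precisely what upgrades the inequality to the nonvacuous form in which $\nu_{q}(|b|_{p_{i}})>0$ appears.
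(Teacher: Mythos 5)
Your proof is correct and takes essentially the same route as the paper's: both reduce to Theorem \ref{ppl2} with $d=q^{v}$, observe that for a prime $p_i\neq q$ the requirement $p_i\nmid (b^{k}-1,N)$ is an obstruction living only at the prime $q$ (so it becomes $\nu_q(|b|_{p_i})>\nu_q(|b|_N)-v$), and use Theorem \ref{poten} to identify $\nu_q(|b|_N)$ with $\max_i\{n-m,\ \nu_q(|b|_{p_i})\}$. You are in fact somewhat more explicit than the paper, which dismisses the computation of $\nu_q(|b|_N)$ as ``easy to see'' and leaves the role of the implicit hypothesis $q^{v}\mid |b|_N$ in the converse direction unexamined.
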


\begin{proof}
We write $\left\vert b\right\vert _{N}=q^{t}k$, with $(q,k)=1$ and
$t\geq v$.
Let us denote $g=\left(  b^{kq^{t-v}}-1, N\right)  $. Suppose that $q^{v}%
\in\mathcal{M}_{b}(N)$. By Theorem \ref{ppl2} we know that $g$ can
not be divisible by other prime different from $q$ and that
$\nu_{q}\left(  N\right) \leq v$. Let $p\neq q$ be a prime divisor
of $N$. Because $p$ not divides $g$, we have $\left\vert
b\right\vert _{p}\nmid kq^{t-v}$ and thus $\nu_{q}( \left\vert
b\right\vert _{p}) >t-v\geq0$ and it is easy to see that
$t=\max\limits_{1\leq i \leq l}\left\{  n-m\text{, }\nu_{q}(
\left\vert b\right\vert _{p_{i}}) \right\}  $. Therefore for all
prime divisor $p$ of $N$ we get that $\nu_{q}( \left\vert
b\right\vert _{p}) >\max\limits_{1\leq i \leq l}\left\{  n-m\text{,
}\nu_{q}( \left\vert b\right\vert _{p_{i}}) \right\} -v$.

Conversely, from the hypothesis the unique prime divisor of $N$ that
could be
a divisor of $g$ is $q$ and thus Theorem \ref{ppl2} implies that $q^{v}%
\in\mathcal{M}_{b}(N)$ .
\end{proof}

As a special case, if in the above theorem we do $v=1$ and since for
any $p$ prime divisor of $N$ we have $\nu_{q}\left(  \left\vert
b\right\vert _{N}\right)  \geq\nu_{q}\left(  |b|_{p}\right)  $ and
thus we obtain the next corollary.

\begin{corollary}
\label{co1}Let $N$ be a positive integer and let $q$ be a prime
divisor of $\left\vert b\right\vert _{N}$, then
$q\in\mathcal{M}_{b}(N)\ $ if and only if

\begin{enumerate}
\item If $\left(  N,\ q\right)  =1$, then $\nu_{q}(\left\vert b\right\vert
_{p})=\nu_{q}\left(  \left\vert b\right\vert _{N}\right)  $ for all
$p$ prime divisor of $N$.

\item If $\left(  N,\ q\right)  >1$, then $q^{2}$ not divides $N$ and
$\ \nu_{q}(\left\vert b\right\vert _{p})=\nu_{q}\left(  \left\vert
b\right\vert _{N}\right)  $ for all $p$ prime divisor of $N$
different from $q$.
\end{enumerate}
\end{corollary}

Note that, from the last theorem, the smallest number $N$ such that $q^{v}%
\in\mathcal{M}_{b}(N)$ has to be a prime $P$ which satisfies $\nu
_{q}(\left\vert b\right\vert _{P}) >0$ and $q^{v}|\ |b|_{P}$. We
obtain the below corollary, recalling that $|b|_{P}$ divides $P-1$.

\begin{corollary}
If $q$ is a prime and $v$ is a positive integer, then the smallest
integer $N$ such that $q^{v}\in\mathcal{M}_{b}(N)$ is a prime
congruent with $1$ $\operatorname{mod}$ $q^{v}$.
\end{corollary}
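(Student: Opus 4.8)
The plan is to prove that every integer $N$ admitting $q^{v}\in\mathcal{M}_{b}(N)$ has a prime divisor $P$ that itself satisfies $q^{v}\in\mathcal{M}_{b}(P)$. Since such a $P$ is at most $N$, the smallest admissible $N$ must coincide with this prime, and the asserted congruence then falls out of the elementary relation $|b|_{P}\mid P-1$. So let $N$ be the least integer with $q^{v}\in\mathcal{M}_{b}(N)$.

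First I would exploit that $q^{v}\in\mathcal{M}_{b}(N)$ forces $q^{v}\mid|b|_{N}$, and write $|b|_{N}$ as the least common multiple of the orders $|b|_{p^{a}}$ over the prime powers $p^{a}$ exactly dividing $N$. Comparing the exponent of $q$ on both sides, at least one prime power $p^{a}$ exactly dividing $N$ must satisfy $q^{v}\mid|b|_{p^{a}}$. The main obstacle is to exclude the possibility $p=q$. For odd $q$ I would apply Theorem \ref{poten}, which gives $\nu_{q}(|b|_{q^{a}})=\max\{0,a-m\}$ with $m=\nu_{q}(b^{|b|_{q}}-1)\geq 1$ (here $\nu_{q}(|b|_{q})=0$ because $|b|_{q}\mid q-1$); combined with the bound $\nu_{q}(N)\leq v$ furnished by the preceding theorem characterizing when $q^{v}\in\mathcal{M}_{b}(N)$, the hypothetical inequality $q^{v}\mid|b|_{q^{a}}$ would require $a\geq v+m>v\geq a$, which is absurd. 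The case $q=2$ is settled at once because $|b|_{2^{a}}\mid 2^{a-1}$ yields $\nu_{2}(|b|_{2^{a}})\leq a-1<v$. Hence $p\neq q$, and since raising a base prime different from $q$ to a power does not change the exponent of $q$ in its order (again by Theorem \ref{poten}), I conclude $q^{v}\mid|b|_{p}$ for the prime $P:=p$.

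Next I would check that this prime $P$ already enjoys the Midy property. Writing $|b|_{P}=q^{v}k$, the order $|b|_{P}$ does not divide $k$, so $P\nmid b^{k}-1$ and therefore $(b^{k}-1,\,P)=1$. By Theorem \ref{ppl2} the defining condition for $q^{v}\in\mathcal{M}_{b}(P)$ is then vacuous, so indeed $q^{v}\in\mathcal{M}_{b}(P)$. Because $P\mid N$ and $q^{v}\in\mathcal{M}_{b}(P)$, the minimality of $N$ forces $N=P$; in particular the smallest admissible integer is prime.

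Finally the congruence is immediate: from $q^{v}\mid|b|_{P}$ and $|b|_{P}\mid P-1$ we obtain $q^{v}\mid P-1$, i.e. $P\equiv 1\pmod{q^{v}}$. The only genuinely delicate point in this plan is the valuation bookkeeping that rules out $q$ as the source of the factor $q^{v}$; everything else is a direct application of Theorems \ref{ppl2} and \ref{poten} together with the minimality of $N$.
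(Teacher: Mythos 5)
Your argument is correct and follows essentially the same route as the paper, which derives the corollary from the preceding theorem's characterization of $N$ with $q^{v}\in\mathcal{M}_{b}(N)$: locate a prime divisor $P$ of $N$ with $q^{v}\mid|b|_{P}$, note that $P$ itself satisfies $q^{v}\in\mathcal{M}_{b}(P)$, invoke minimality to get $N=P$, and finish with $|b|_{P}\mid P-1$. You merely make explicit the valuation bookkeeping (via Theorems \ref{ppl2} and \ref{poten}) that the paper compresses into the remark preceding the corollary, and your details check out.
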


As a consequence of the above corollary we have a particular case of
the Dirichlet's Theorem about primes in arithmetic progressions.

\begin{corollary}
If $q$ is a prime and $v$ is a positive integer, there are
infinitely many primes which are congruent to $1$ modulo $q^{v}$.
\end{corollary}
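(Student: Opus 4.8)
The plan is to argue by contradiction, turning the preceding corollary into an engine that produces primes congruent to $1$ modulo $q^{v}$ on demand. I assume that there are only finitely many such primes and list them as $P_{1},\dots,P_{r}$ (allowing $r=0$). The preceding corollary says that for \emph{any} base $b$ for which the set $\{N:\ q^{v}\in\mathcal{M}_{b}(N)\}$ is nonempty, its least element is a prime congruent to $1$ modulo $q^{v}$, and by the very definition of $\mathcal{M}_{b}$ that element is coprime to $b$. Hence it suffices to manufacture a single base $b$, divisible by $q$ and by each $P_{j}$, for which some $N$ with $q^{v}\in\mathcal{M}_{b}(N)$ exists: the prime thereby produced is congruent to $1$ modulo $q^{v}$ yet coprime to $b$, so it is distinct from each $P_{j}$, which is the contradiction I am after.

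To build such a base I would take $b=q\,P_{1}\cdots P_{r}$ (so $b\ge 2$; if $r=0$ simply take $b=q$) and consider the integer
\[
M=\frac{b^{q^{v}}-1}{b^{q^{v-1}}-1}=1+b^{q^{v-1}}+b^{2q^{v-1}}+\cdots+b^{(q-1)q^{v-1}}.
\]
Then $M>1$, so $M$ has a prime divisor $P$. Reducing the right-hand sum modulo any prime divisor $p$ of $b$ and using $b\equiv 0\pmod p$ gives $M\equiv 1\pmod p$; consequently $q\nmid M$ and $P_{j}\nmid M$ for every $j$, so $P\ne q$, $P\ne P_{j}$, and $(P,b)=1$. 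Moreover $P\mid M\mid b^{q^{v}}-1$, so the order $|b|_{P}$ divides $q^{v}$, i.e.\ $|b|_{P}=q^{j}$ for some $j\le v$.

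The crux, and the step I expect to be the main obstacle, is to upgrade this to $|b|_{P}=q^{v}$ exactly, equivalently to rule out $P\mid b^{q^{v-1}}-1$. Here I would use the summation form of $M$: if $b^{q^{v-1}}\equiv 1\pmod P$, then each of the $q$ summands is $\equiv 1\pmod P$, whence $M\equiv q\pmod P$ and so $P\mid q$, forcing $P=q$, contrary to $P\ne q$. Thus $b^{q^{v-1}}\not\equiv 1\pmod P$, and since $|b|_{P}$ divides $q^{v}$ but not $q^{v-1}$ we conclude $|b|_{P}=q^{v}$. This primitive-prime-divisor phenomenon is the only genuinely delicate point; everything else (the choice of $b$, the coprimality bookkeeping, and the identification of $P$ as a new prime) is routine once it is in hand.

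It then remains to cash this in. With $|b|_{P}=q^{v}$ I have $|b|_{P}=q^{v}\cdot 1$, and $(b^{1}-1,P)=1$ since $P\mid b-1$ would give $|b|_{P}=1$; hence Theorem~\ref{ppl2} yields $q^{v}\in\mathcal{M}_{b}(P)$, so the set $\{N:\ q^{v}\in\mathcal{M}_{b}(N)\}$ is nonempty. As in the preceding corollary, $q^{v}=|b|_{P}$ divides $P-1$, so $P\equiv 1\pmod{q^{v}}$; and since $(P,b)=1$, the prime $P$ is none of $P_{1},\dots,P_{r}$. This contradicts the assumption that $P_{1},\dots,P_{r}$ exhaust the primes congruent to $1$ modulo $q^{v}$, and the infinitude follows.
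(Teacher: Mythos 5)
Your proof is correct, but it takes a genuinely different route from the paper's. The paper argues without contradiction: it invokes the preceding corollary once to obtain a prime $P_{1}\equiv 1\pmod{q^{v}}$, then picks $t_{1}$ with $q^{t_{1}v}>P_{1}$ and applies the corollary again with $q^{t_{1}v}$ in place of $q^{v}$ to obtain a strictly larger prime $P_{2}\equiv 1\pmod{q^{t_{1}v}}$ (hence also $\equiv 1\pmod{q^{v}}$), and iterates. You instead assume the list of such primes is finite, encode it into the base $b=qP_{1}\cdots P_{r}$, and extract a new prime from $M=(b^{q^{v}}-1)/(b^{q^{v-1}}-1)$; your key step, forcing $\left\vert b\right\vert_{P}=q^{v}$ exactly by observing that $b^{q^{v-1}}\equiv 1\pmod{P}$ would give $M\equiv q\pmod{P}$, is the standard cyclotomic-polynomial argument and you execute it correctly. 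What your version buys is self-containedness: the paper's proof tacitly assumes that for each exponent the set $\{N:\ q^{tv}\in\mathcal{M}_{b}(N)\}$ is nonempty, i.e., that some prime $P$ satisfies $q^{tv}\mid\left\vert b\right\vert_{P}$, and that existence claim is exactly what your construction of $M$ supplies. Conversely, once you have $\left\vert b\right\vert_{P}=q^{v}$ you already get $P\equiv 1\pmod{q^{v}}$ and $P\notin\{P_{1},\dots,P_{r}\}$ directly, so your detour through Theorem~\ref{ppl2} and the Midy property is ornamental, whereas in the paper's route the Midy machinery is load-bearing.
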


\begin{proof}
By the last corollary there exists a prime $P_{1}$ congruent with
$1$ modulo $q^{v}$ and which satisfies
$q^{v}\in\mathcal{M}_{b}(P_{1})$. Take $t_{1}$ an integer such that
$q^{t_{1}v}>P_{1}$. Once again, we can find a prime $P_{2}$
congruent with $1$ modulo $q^{t_{1}v}$ such that $q^{t_{1}v}\in\mathcal{M}%
_{b}(P_{2})$ and so on.
\end{proof}

\section*{Acknowledgements}
The authors are members of the research group: \'Algebra, Teor\'ia
de N\'umeros y Aplicaciones, ERM. J.H. Castillo was partially
supported by CAPES, CNPq from Brazil and Universidad de Nariño from
Colombia. J.M. Velásquez-Soto was partially supported by CONICET
from Argentina and Universidad del Valle from Colombia.

\bibliographystyle{amsalpha}
\bibliography{bibliografiaggp}

\end{document}